\theoremstyle{definition}
\newtheorem{df}{Definition} [section]
\theoremstyle{plain}
\newtheorem{thm}[df]{Theorem}
\newtheorem{lemma}[df]{Lemma}
\newtheorem{obs}[df]{Observation}
\title{On the polygon determined by the short diagonals of a convex polygon}
\author{}
\author{Jacqueline Cho$^1$}
\thanks{$^1$Phillips Exeter Academy, Exeter, NH 03833, \texttt{jcho1@exeter.edu}}
\author{Dan Ismailescu$^2$}
\thanks{$^2$Mathematics Department, Hofstra University, Hempstead, NY 11549, \texttt{dan.p.ismailescu@hofstra.edu}}
\author{Yiwon Kim $^3$}
\thanks{$^3$Taft School, Watertown, CT 06795, \texttt{stefankim@taftschool.edu}}
\author{Andrew Woojong Lee$^4$}
\thanks{$^4$Choate Rosemary Hall, Wallingford, CT 06492, \texttt{alee21@choate.edu}}
\begin{document}

\begin{abstract}
%\fontsize{12pt}{15pt}\selectfont
Let $K$ be a convex pentagon in the plane and let $K_1$ be the pentagon bounded by the diagonals of $K$.
It has been conjectured that the maximum of the ratio between the areas of $K_1$ and $K$ is reached when $K$ is an affine regular pentagon.
In this paper we prove this conjecture. We also show that for polygons with at least six vertices the trivial answers are the best possible.
\end{abstract}

\maketitle

%\thispagestyle{empty}
%\newpage
\pagenumbering{arabic}

%%%%%%%%%%%%%%%%%%%%%%%%%%%%%%%%%%%%%%%%%%%%%%%%%%%%%%%%%%%%%%%%%%%%%%%%%%%%%%%%%%%%%%%%%%%%%%%%%%%%%%%%%%%%%%%%%%%%%%%%%%%%%%%%%%%%%%%%%%%%%%%%%%%%%%%%%%%%%%%%%%%%%
%%%%%%%%%%%%%%%%%%%%%%%%%%%%%%%%%%%%%%%%%%%%%%%%%%%%%%%%%%%%%%%%%%%%%%%%%%%%%%%%%%%%%%%%%%%%%%%%%%%%%%%%%%%%%%%%%%%%%%%%%%%%%%%%%%%%%%%%%%%%%%%%%%%%%%%%%%%%%%%%%%%%%
%%%%%%%%%%%%%%%%%%%%%%%%%%%%%%%%%%%%%%%%%%%%%%%%%%%%%%%%%%%%%%%%%%%%%%%%%%%%%%%%%%%%%%%%%%%%%%%%%%%%%%%%%%%%%%%%%%%%%%%%%%%%%%%%%%%%%%%%%%%%%%%%%%%%%%%%%%%%%%%%%%%%%
%%%%%%%%%%%%%%%%%%%%%%%%%%%%%%%%%%%%%%%%%%%%%%%%%%%%%%%%%%%%%%%%%%%%%%%%%%%%%%%%%%%%%%%%%%%%%%%%%%%%%%%%%%%%%%%%%%%%%%%%%%%%%%%%%%%%%%%%%%%%%%%%%%%%%%%%%%%%%%%%%%%%%
%%%%%%%%%%%%%%%%%%%%%%%%%%%%%%%%%%%%%%%%%%%%%%%%%%%%%%%%%%%%%%%%%%%%%%%%%%%%%%%%%%%%%%%%%%%%%%%%%%%%%%%%%%%%%%%%%%%%%%%%%%%%%%%%%%%%%%%%%%%%%%%%%%%%%%%%%%%%%%%%%%%%%
%%%%%%%%%%%%%%%%%%%%%%%%%%%%%%%%%%%%%%%%%%%%%%%%%%%%%%%%%%%%%%%%%%%%%%%%%%%%%%%%%%%%%%%%%%%%%%%%%%%%%%%%%%%%%%%%%%%%%%%%%%%%%%%%%%%%%%%%%%%%%%%%%%%%%%%%%%%%%%%%%%%%%
%%%%%%%%%%%%%%%%%%%%%%%%%%%%%%%%%%%%%%%%%%%%%%%%%%%%%%%%%%%%%%%%%%%%%%%%%%%%%%%%%%%%%%%%%%%%%%%%%%%%%%%%%%%%%%%%%%%%%%%%%%%%%%%%%%%%%%%%%%%%%%%%%%%%%%%%%%%%%%%%%%%%%

\begin{section}{\bf The Problem}
Given $K=A_1A_2\ldots A_n$, a convex polygon with $n$ vertices with $n\ge 5$, we draw the short diagonals $A_1A_3,\,A_2A_4,$ $\ldots, A_{n-2}A_n,\,A_{n-1}A_1,\,A_nA_2$. A new convex $n$-gon, $K_1=B_1B_2\ldots B_n$ is created inside $K$, where
$B_i$ is the intersection point of $A_{i-1}A_{i+1}$ and $A_iA_{i+2}$ for all $1\le i\le n$ as shown in figure \ref{l1}.
\vspace{-1cm}

\begin{figure}[!htb]
\centering
\includegraphics[scale=1.7]{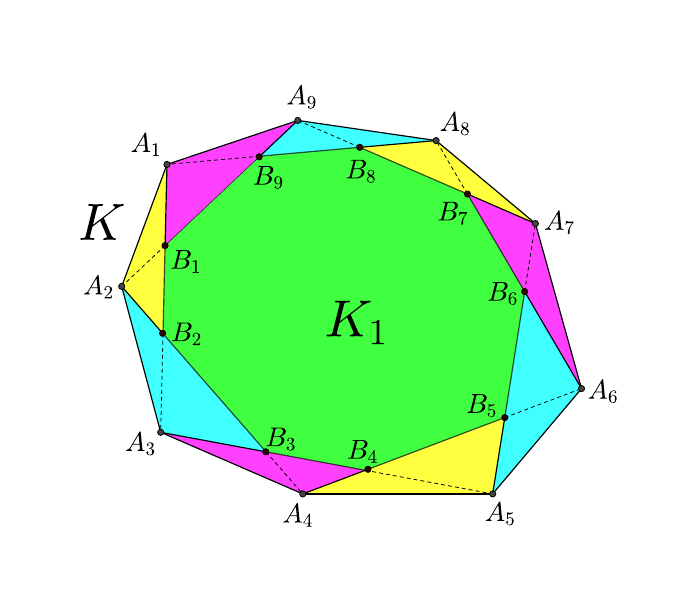}
\vspace{-1cm}
\caption{The polygon $K_1$ created by the short diagonals of $K$}
\label{l1}
\end{figure}
Obviously, for every $K$, the ratio between the area of $K_1$ and the area of $K$ lies in the interval $(0,1)$, but can we do better? The somewhat surprising answer is that, with one notable exception, the answer is no!

Let us first introduce several notations. The area of any polygon will be denoted by the symbol $\Delta$. That is, the area of the polygon $K$ is denoted as $\Delta(K)$ or $\Delta(A_1A_2\ldots A_n)$. We make the convention to list the vertices of any simple polygon in counterclockwise order.

A {\it peripheral} triangle of $K$ is a triangle formed by three consecutive vertices of $K$. In other words every triangle of the form $A_{k}A_{k+1}A_{k+2}$ with $1\le k \le n$ is such a triangle. A {\it marginal} triangle of $K$ is any triangle of the form $A_kA_{k+1}B_{k+1}$; these triangles are colored in figure \ref{l1}.

We denote the sum of the areas of the peripheral triangles by $\Omega(K)$, and the sum of the areas of all the marginal triangles by $\Psi(K)$ that is,
\begin{equation}\label{periphery}
\Omega(K)=\sum_{k=1}^n \Delta(A_kA_{k+1}A_{k+2}),\quad\text{and}\quad \Phi(K)=\sum_{k=1}^n \Delta(A_kA_{k+1}B_{k+1})
\end{equation}
Since clearly $\Delta(A_kA_{k+1}A_{k+2})> \Delta(A_kA_{k+1}B_{k+1})$ it follows that
\begin{equation}\label{K1K}
\Delta(K_1)=\Delta(K)-\Phi(K)>\Delta(K)-\Omega(K).
\end{equation}

We can now prove our first result.
\begin{thm}\label{easythm}
(a) For every integer $n\ge 5$ and every $\epsilon>0$ there exists a convex $n$-gon $K$ such that $\Delta(K_1)/\Delta(K)<\epsilon$.

(b) For every integer $n\ge 6$ and every $\epsilon>0$ there exists a convex $n$-gon $K$ such that $\Delta(K_1)/\Delta(K)>1-\epsilon$.
\end{thm}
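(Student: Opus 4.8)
Both parts will follow by degenerating a fixed triangle $PQR$ into a convex $n$-gon, and the whole argument rests on one elementary remark: since $K_1$ is bounded by the short diagonals and lies on the inner side of each of them, $K_1$ is disjoint from every peripheral triangle $A_{i-1}A_iA_{i+1}$, namely the region of $K$ cut off on the outer side of the diagonal $A_{i-1}A_{i+1}$. I would record this first, because it lets me bound $\Delta(K_1)$ from above by discarding a single peripheral triangle, while the lower bound I need is already packaged in the inequality $\Delta(K_1)>\Delta(K)-\Omega(K)$ of (\ref{K1K}).

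For part (a) the plan is to \emph{explode one vertex}. Fix a triangle $PQR$ and replace $P$ by a cluster of $n-2$ points $A_1,\dots,A_{n-2}$ placed on a tiny convex arc within distance $\delta$ of $P$, keeping $A_{n-1}=Q$ and $A_n=R$; for small $\delta$ this is a strictly convex $n$-gon, and it needs only $n-2\ge 3$ points, so every $n\ge 5$ is covered. The point is that the single short diagonal $A_{n-2}A_n$ cuts off the peripheral triangle $A_{n-2}QR$, which already exhausts almost all of $K$: as $\delta\to 0$ both $\Delta(A_{n-2}QR)$ and $\Delta(K)$ tend to $\Delta(PQR)$. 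Disjointness of $K_1$ from that triangle gives $\Delta(K_1)\le \Delta(K)-\Delta(A_{n-2}QR)$, hence $\Delta(K_1)/\Delta(K)\le 1-\Delta(A_{n-2}QR)/\Delta(K)\to 0$, and choosing $\delta$ small forces the ratio below $\epsilon$.

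For part (b) I would instead shrink every peripheral triangle simultaneously and then quote $\Delta(K_1)>\Delta(K)-\Omega(K)$, so that it suffices to make $\Omega(K)/\Delta(K)\to 0$. Place three clusters of sizes $n_1,n_2,n_3\ge 2$ with $n_1+n_2+n_3=n$ on tiny convex arcs of radius $\delta$ about $P$, $Q$, $R$. Because the clusters are contiguous in the cyclic order and each contains at least two vertices, any three consecutive vertices $A_kA_{k+1}A_{k+2}$ include two that lie in a common cluster and hence within $\delta$ of each other; the peripheral triangle $A_kA_{k+1}A_{k+2}$ then has a side of length at most $\delta$, so its area is at most $\tfrac12\delta\,\operatorname{diam}(PQR)$. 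Summing over the $n$ triples gives $\Omega(K)\le \tfrac{n}{2}\,\delta\,\operatorname{diam}(PQR)\to 0$ while $\Delta(K)\to\Delta(PQR)>0$, so the ratio exceeds $1-\epsilon$ once $\delta$ is small. This is exactly where $n\ge 6$ is forced: three clusters of size at least two require $n\ge 6$, and a singleton cluster would leave a vertex flanked by two long edges whose peripheral triangle does not shrink.

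The steps that need care, rather than the degeneration itself, are the combinatorial bookkeeping in (b) — that the clusters can be taken contiguous so that no consecutive triple spans three clusters — and the verification that both families are genuinely strictly convex for small $\delta$; these are routine. The one genuinely load-bearing geometric fact is the opening remark that $K_1$ lies inside each short diagonal and therefore misses the peripheral triangle it cuts off, which is what makes the single-triangle estimate in (a) legitimate.
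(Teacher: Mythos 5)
Your proposal is correct, and it splits neatly into a half that matches the paper and a half that does not. For (b) you are doing essentially what the paper does: the load-bearing inequality is exactly \eqref{K1K}, $\Delta(K_1)>\Delta(K)-\Omega(K)$, and the construction makes every peripheral triangle thin by clustering the vertices. The paper realizes this with $m\ge 3$ clusters of size two (a regular $m$-gon on a circle together with a copy rotated by $\epsilon/20$), which forces a separate patch for odd $n$; your three contiguous clusters of sizes $n_1,n_2,n_3\ge 2$ at the corners of a fixed triangle handle all $n\ge 6$ uniformly, and your pigeonhole observation that a consecutive triple cannot meet three contiguous clusters (the middle vertex's cluster would have to be a singleton) is precisely where $n\ge 6$ enters, consistent with the pentagon being the exceptional case. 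For (a) your construction is genuinely different: the paper takes a thin triangle $A_1A_3A_5$ of area $\epsilon$ with two large ears $A_2$, $A_4$ of area $1$ each (plus flat-arc filler vertices), and argues that $K_1$ lies inside the polygon obtained by deleting $A_2$ and $A_4$, giving the explicit bounds $\Delta(K_1)\le 2\epsilon$ and $\Delta(K)\ge 2$; you instead collapse one vertex of a fixed triangle $PQR$ into a cluster of $n-2$ points, so that the single peripheral triangle $A_{n-2}A_{n-1}A_n$ exhausts almost all of $K$ and one cut along the diagonal $A_{n-2}A_n$ finishes the job. Both versions of (a) rest on the same geometric fact, which you correctly isolate and which the paper uses only implicitly: each side $B_{i-1}B_i$ of $K_1$ lies on the short diagonal $A_{i-1}A_{i+1}$, so the convex polygon $K_1$ lies in the closed half-plane opposite the peripheral triangle at $A_i$ and is disjoint from its interior. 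What each approach buys: yours in (a) avoids any bookkeeping with filler vertices and works identically for every $n\ge 5$, while the paper's gives a cleaner quantitative bound rather than a limiting argument; in (b) yours removes the even/odd case split at the cost of the (routine, and correctly flagged) verification that three small outward-bulging arcs yield a strictly convex $n$-gon for small $\delta$.
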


\begin{figure}[!htb]
\centering
\includegraphics[scale=1.3]{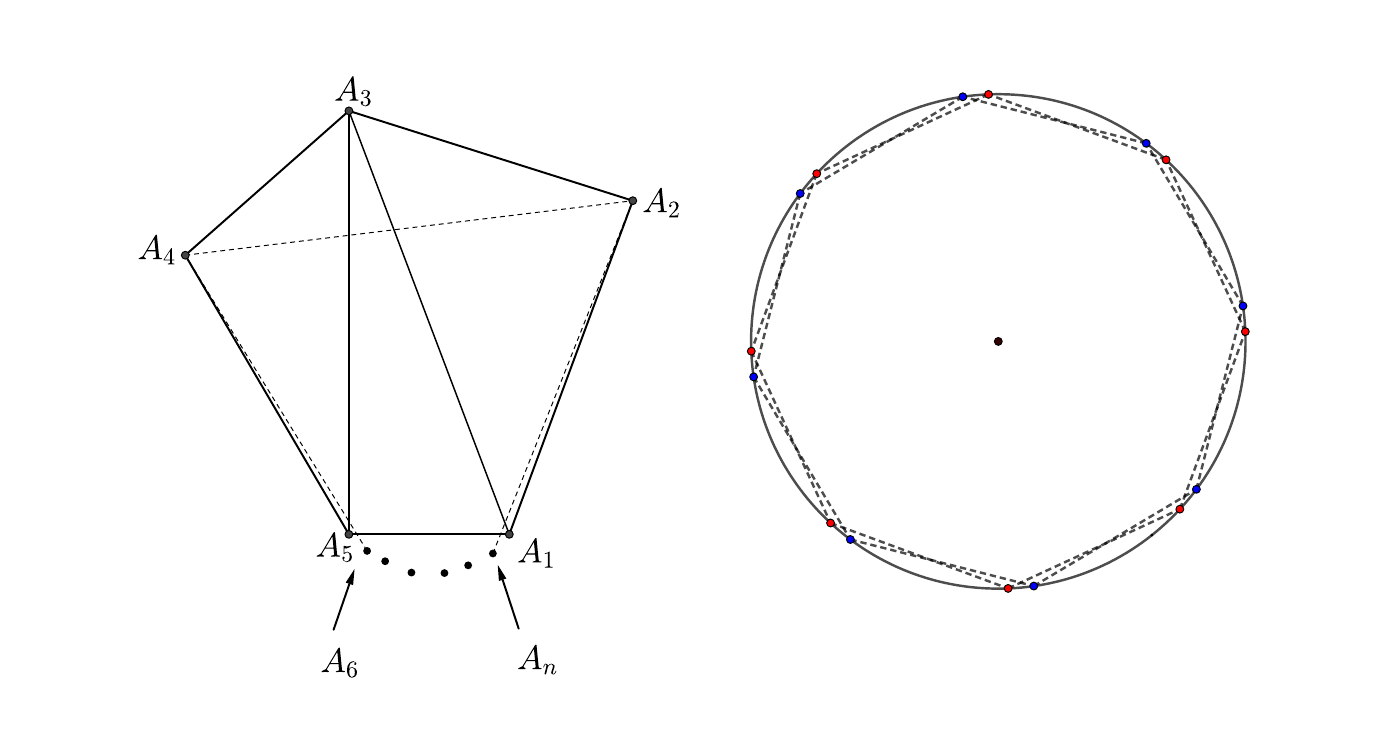}
\vspace{0cm}
\caption{ (a) Construction for $\Delta(K_1)/\Delta(K)<\epsilon$ (b) Construction for $\Delta(K_1)/\Delta(K)>1-\epsilon$}
\label{l2}
\end{figure}

\begin{proof}
For the first part we construct $K$ as follows.
Select first the vertices $A_1$, $A_3$ and $A_5$ so that $\Delta(A_1A_3A_5)=\epsilon$. Next, place the vertices
$A_2$ and $A_4$ as in figure \ref{l2}$(a)$ with $\Delta(A_1A_2A_3)=\Delta(A_1A_5A_4)=1$. Finally, place the remaining
vertices (if any) along a flat arc close to the segment $A_1A_5$ so that the convex hull of the points $A_1,\,A_5,\,A_6,\ldots,\,A_n$ has area smaller than $\epsilon$.
It is easy to see that the polygon $K_1$ lies within the boundaries of the convex polygon
obtained by removing vertices $A_2$ and $A_4$ from $K$. It follows that $\Delta(K_1)\le 2\epsilon$.
On the other hand, by construction $\Delta(K)\ge 2$. It follows that $\Delta(K_1)/\Delta(K)\le \epsilon$ as claimed.

Next, we proceed to proving the second part. Let $0<\epsilon$ and assume first that $n=2m$ is even.
Recall that $n\ge 6$ and therefore $m\ge 3$.
Start by considering a regular $m$-gon inscribed in a circle of unit radius (the red points), then rotate it by an
angle of $\epsilon/20$ to obtain the blue points, as in figure \ref{l2}$(b)$.
Let $K$ be the convex $n$-gon whose vertices are the blue and the red points.

One can bound the area of every peripheral triangle of $K$ as follows
\begin{equation}
\Delta(A_{k}A_{k+1}A_{k+2})\le \frac{1}{2}\cdot \frac{\epsilon}{20}\cdot\frac{2\pi}{m}\cdot\sin\frac{\pi}{2m}\le \frac{\epsilon}{n^2},
\end{equation}
and therefore $\Omega(K)\le \epsilon/n$. Using now \eqref{K1K} we have that
\begin{equation*}
\Delta(K_1)>\Delta(K)-\Omega(K)\ge \Delta(K)-\frac{\epsilon}{n}>(1-\epsilon)\Delta(K),
\end{equation*}
where the last inequality is equivalent to $\Delta(K)>1/n$ which is obviously true.
For $n=2m+1$ odd, the same construction works; place the additional vertex close to one of the existing $m$ pairs of points.
Since neither the area of $K$, nor the area of $K_1$ change by much, the conclusion remains valid.
\end{proof}

The theorem above solves the problem in all cases but one: {\it how large can the ratio $\Delta(K_1)/\Delta(K)$ be when $K$ is a pentagon?} This innocent looking problem remained unsolved despite the fact that quite a few established researchers expressed interest in solving it.

In this paper we answer this question in the following
\begin{thm}\label{main}
Let $K$ be a convex pentagon and let $K_1$ be the pentagon formed by the diagonals of $K$. Then
\begin{equation*}
\frac{\Delta(K_1)}{\Delta(K)}\le \frac{7-3\sqrt{5}}{2}=0.14858\ldots,
\end{equation*}
with equality if and only if $K$ is an affine regular pentagon.
\end{thm}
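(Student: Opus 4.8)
My plan rests on the affine invariance of the quantity $\Delta(K_1)/\Delta(K)$. Since an affine map carries lines to lines, intersections to intersections, and preserves ratios of areas, it sends the diagonal construction of $K_1$ to the corresponding construction for the image pentagon; hence the ratio depends only on the affine equivalence class of $K$. I would therefore work entirely with affine covariants of $K$, the natural choice being the five peripheral areas $a_k=\Delta(A_kA_{k+1}A_{k+2})$ (indices mod $5$) together with $S=\Delta(K)$. All of these scale by the same factor under an affine map, so $\Delta(K_1)/\Delta(K)$ is a homogeneous degree-$0$ function of $(a_1,\dots,a_5,S)$, and $p:=\Omega(K)=\sum_k a_k$ is already the quantity appearing in \eqref{periphery}.

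The next step is to turn $\Phi(K)$ into an explicit rational function of these invariants. Writing $B_{k+1}=A_kA_{k+2}\cap A_{k+1}A_{k+3}$ and $[ijl]:=\Delta(A_iA_jA_l)$, the elementary ratio lemma gives that $B_{k+1}$ divides $A_kA_{k+2}$ so that
\[
\Delta(A_kA_{k+1}B_{k+1})=\frac{a_k\,[k,k+1,k+3]}{[k,k+1,k+3]+a_{k+1}}.
\]
Splitting the pentagon along a diagonal yields the additivity identity $[k,k+1,k+3]=S-a_{k+1}-a_{k+3}$, so the denominator collapses to $S-a_{k+3}$ and, after summing,
\[
\Phi(K)=\sum_{k}\frac{a_k(S-a_{k+1}-a_{k+3})}{S-a_{k+3}}=p-\sum_{k}\frac{a_ka_{k+1}}{S-a_{k+3}}.
\]
Finally, the five vertices obey a single Grassmann--Plücker relation $[123][145]-[124][135]+[125][134]=0$; substituting the additivity expressions for the diagonal areas collapses it to the clean constraint $S^2-pS+q=0$, where $q:=\sum_k a_ka_{k+1}$. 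Using $S-p=-q/S$ together with $q=\sum_k a_ka_{k+1}$ to combine terms in $\Delta(K_1)=S-\Phi(K)$ from \eqref{K1K}, I obtain the closed form
\[
\Delta(K_1)=\frac1S\sum_{k=1}^5\frac{a_ka_{k+1}a_{k+3}}{S-a_{k+3}},
\]
so that the theorem is equivalent to the inequality $\dfrac{1}{S^2}\sum_{k}\dfrac{a_ka_{k+1}a_{k+3}}{S-a_{k+3}}\le\dfrac{7-3\sqrt5}{2}$ over all realizable tuples.

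To prove this I would normalise $S=1$ (legitimate by homogeneity), so the constraint reads $q=p-1$ and the problem becomes: maximise $R(a)=\sum_k a_ka_{k+1}a_{k+3}/(1-a_{k+3})$ over positive tuples with $\sum_k a_ka_{k+1}=\sum_k a_k-1$ and the convexity inequalities $1-a_{k+1}-a_{k+3}>0$. Both $R$ and the constraint are invariant under the cyclic shift $a_k\mapsto a_{k+1}$, so the symmetric point $a_1=\dots=a_5=(5-\sqrt5)/10$ is a critical point of the associated Lagrangian, and direct substitution there gives $R=5a^3/(1-a)=(7-3\sqrt5)/2$, matching the claimed bound and corresponding precisely to the affine regular pentagon.

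The essential difficulty is global: I must show this symmetric critical point is the unique maximiser. I would first dispose of the boundary, where the tuple leaves the realizable (and non-convex) region and the pentagon degenerates; there Theorem \ref{easythm}(a) shows $\Delta(K_1)/\Delta(K)\to0$, far below the target, so the maximum is attained at an interior critical point. The crux is then ruling out asymmetric critical points of the Lagrange system, and the cleanest route I expect is a smoothing argument: showing that any deviation from the all-equal configuration can be reduced, e.g. that replacing two of the $a_k$ by values closer together (while respecting the constraint surface) strictly increases $R$, which forces cyclic symmetry at the optimum and, through the strictness, yields the equality characterisation. Establishing this monotonicity---equivalently a negative-definiteness or Schur-concavity statement for $R$ restricted to the constraint surface---is the step I expect to demand the most work, and it is precisely here that the golden ratio and the value $(7-3\sqrt5)/2$ should emerge.
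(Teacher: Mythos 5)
Your reduction is correct as far as it goes, and it is worth noting that it is precisely the third ``prior attempt'' described in Section 2 of the paper (Elkies' strategy via the peripheral areas $\sigma_k$ and Gauss' pentagonal relation \eqref{gauss}): your ratio-lemma computation of the marginal triangles, the identity $[k,k+1,k+3]=S-a_{k+1}-a_{k+3}$, and the resulting closed form
\begin{equation*}
\Delta(K_1)=\frac{1}{S}\sum_{k=1}^{5}\frac{a_k a_{k+1} a_{k+3}}{S-a_{k+3}}
\end{equation*}
all check out, as does the value $(7-3\sqrt{5})/2$ at the symmetric point $a_k=(5-\sqrt{5})/10$. But at that point your proposal stops being a proof: the entire content of the theorem is the global statement that the cyclically symmetric point is the unique maximiser, and you explicitly leave that step (``negative-definiteness or Schur-concavity \ldots the step I expect to demand the most work'') unestablished. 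Observing that the symmetric point is a critical point of the Lagrangian proves nothing about global maximality, and the paper explicitly warns that standard methods (partial derivatives, Lagrange multipliers) are ``destined to fail'' here. Moreover, your objective $R$ and constraint are invariant only under the cyclic group, not the full symmetric group, so off-the-shelf Schur-concavity/mixing arguments do not apply directly; pairwise smoothing moves along the constraint surface $q=p-1$ can stall at configurations with only a reflection symmetry, and nothing you write rules this out. The paper circumvents exactly this obstruction by choosing different variables (a wedge-product parametrization with $O=AD\cap CE$, normalization $\Delta(AOC)=1$, and the constraints $c,d\ge 1$ extracted from assuming $ABC$ is the largest peripheral triangle), reducing to the polynomial inequality \eqref{algebra}, and then exhibiting an explicit certificate: a decomposition \eqref{Qs} of the $73$-term polynomial into manifestly nonnegative pieces, found by solving a linear system in Maple. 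Nothing of comparable force appears in your plan.

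A second, concrete error is your boundary disposal. Theorem \ref{easythm}(a) asserts that \emph{there exist} pentagons with $\Delta(K_1)/\Delta(K)<\epsilon$; it does not say that \emph{every} degenerate limit has small ratio, which is what your argument needs. In fact it is false: if a single $a_j\to 0$ (three consecutive vertices becoming collinear), two of the five terms of $R=\sum_k a_k a_{k+1} a_{k+3}/(1-a_{k+3})$ survive with positive limits --- geometrically, the pentagon degenerates to a quadrilateral whose diagonal construction still bounds a region of positive area --- so the ratio does not tend to $0$ on the boundary. You would therefore need a separate bound for the closure of the feasible region (essentially the quadrilateral case) before any interior critical-point analysis could even begin. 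In summary: the setup is sound and elegantly derived, but both the boundary step and the central global-maximality step are missing or wrong, and these constitute the whole difficulty of the theorem.
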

\end{section}

\begin{section}{\bf Prior attempts}
The result in Theorem \ref{main} ``feels'' true and was conjectured in \cite{elkies} where several contributors suggested
various lines of attack. Below we describe three such strategies.

The first approach is via cartesian coordinates: this looks quite tempting since one can fix three vertices of the pentagon
at say $(0,0)$, $(1,0)$ and $(0,1)$. This simplifying assumption is possible due to the fact that the problem
is affine invariant. The problem becomes an optimization in four variables - the coordinates of the remaining two vertices.
The drawback is that since one has to take convexity into account, there are several side inequalities that make the analysis of the resulting optimization problem  rather difficult.

The second technique assumes that the vertices of the initial pentagon lie on a circle. Indeed, the five vertices of $K$  determine a unique ellipse, and after an appropriate affine transformation we may assume that this ellipse is in fact a circle.

This technique handles nicely the convexity restrictions but then one can fix the coordinates of only one vertex of $K$, say $A(1,0)$, while the coordinates of the remaining four points will be of the form $(\cos{t_i},\sin{t_i})$, for $0<t_1<t_2<t_3<t_4<2\pi$.

In the end, we are still left with four variables, the side restriction $0<t_1<t_2<t_3<t_4<2\pi$ and a bunch of trigonometric functions. While these can be eliminated via the substitution $\cos{t}= (u^2-1)/(u^2+1)$, and $\sin{t}=2u/(u^2+1)$ where $u=\cot\frac{t}{2}$, the expressions for the areas of both $K$ and $K_1$ will become complicated rational functions of $u_1,\,u_2,\,u_3$, and $u_4$. Moreover, since cotangent is decreasing on $(0,\pi)$ we have the side constraint $u_1>u_2>u_3>u_4$.

The third method was suggested by Elkies and it employs Gauss' pentagonal formula \cite{gauss}. Given a convex pentagon $K=ABCDE$, denote the areas of the peripheral triangles of $K$ by $\sigma_1=\Delta(ABC)$, $\sigma_2=\Delta(BCD)$, $\sigma_3=\Delta(CDE)$, $\sigma_4=\Delta(DEA)$, and $\sigma_5=\Delta(EAB)$.

Gauss' pentagonal formula states that $\Delta(K)$, the area of the pentagon is the larger root of the quadratic equation
\begin{equation}\label{gauss}
\Delta(K)^2-(\sigma_1+\sigma_2+\sigma_3+\sigma_4+\sigma_5)\Delta(K)+(\sigma_1\sigma_2+\sigma_2\sigma_3+\sigma_3\sigma_4+\sigma_4\sigma_5+\sigma_5\sigma_1)=0
\end{equation}
Elkies' idea is to express both $\Delta(K)$ and $\Delta(K_1)$ in terms of the variables $\sigma_i$, $1\le i\le 5$.
Of course, after an eventual scaling, one can set $\sigma_1=1$, so the problem still depends on four variables.
The appeal of this method is that one eventually wants to show that the extremal value for the ratio $\Delta(K_1)/\Delta(K)$ is attained when all the peripheral triangles have the same area, as this is a necessary and sufficient condition for the pentagon to be affine regular. Moreover, there are no side conditions to worry about as the variables $\sigma_i$ are independent of each other.

The downside of this method is that the expressions of $\Delta(K)$ and especially $\Delta(K_1)$ are rather complicated as they
contain radicals. This is just an immediate consequence of Gauss' pentagonal formula above. By the time one eliminates the radicals, the resulting expression becomes a polynomial of large degree, and again quite difficult to handle.

While all the above approaches failed to produce a proof, there is a lot to be learned from them as we have to deal with two competing goals. On one hand, convexity has to come into play at some point. This means that one way or another, there are going to be some constraints; the simpler they are the better. On the other hand, the objective function $\Delta(K_1)/\Delta(K)$ has to have a rather manageable form.

It is clear that this is an optimization problem in four variables with several side conditions, and not an easy one.
The key to solving this problem is in choosing the {\it right variables}. This is what we are going to do in the following section.
\end{section}
%%%%%%%%%%%%%%%%%%%%%%%%%%%%%%%%%%%%%%%%%%%%%%%%%%%%%%%%%%%%%%%%%%%%%%%%%%%%%%%%%%%%%%%%%%%%%%%%%%%%%%%%%%
\begin{section}{\bf The Setup}
Throughout the remainder of the paper we use the {\it outer product} of two vectors to express areas.
This operation, also known as {\it exterior product}, or {\it wedge product} is defined as
follows.

For any two vectors $\mathbf{v}=(a, \, b)$ and $\mathbf{u}=(c,\,d)$ let the outer product of $\mathbf{v}$ and
$\mathbf{u}$ be given by
\begin{equation}\label{outerproduct}
\mathbf{v}\wedge\mathbf{u}:=(ad-bc)/2.
\end{equation}
The outer product represents the {\it signed
area} of the triangle determined by the vectors $\mathbf{v}$ and
$\mathbf{u}$, where the $\pm$ sign depends on whether the angle
between $\mathbf{v}$ and $\mathbf{u}$ - measured in the
counterclockwise direction from $\mathbf{v}$ towards $\mathbf{u}$ -
is smaller than or greater than $180^{\circ}$.

The following properties of the outer product are simple
consequences of the definition:
\begin{itemize}
\item{anti-commutativity: $\mathbf{v}\wedge\mathbf{u}=-
\mathbf{u}\wedge\mathbf{v}$ and in particular
$\mathbf{v}\wedge\mathbf{v}=0$.}

\item{linearity: $(\alpha\mathbf{v}+\beta
\mathbf{u})\wedge\mathbf{w}=\alpha\mathbf{v}\wedge\mathbf{w}
+\beta\mathbf{u}\wedge\mathbf{w}.$}
\end{itemize}

Let $K=ABCDE$ be an arbitrary convex pentagon. After an
eventual relabeling of the vertices we may assume that
\begin{equation}\label{assumption}
\Delta(ABC)=\max\{\Delta(ABC),\, \Delta(BCD),\, \Delta(CDE),\,
\Delta(DEA),\, \Delta(EAB)\},
\end{equation}
that is, we assume that $ABC$ is the peripheral triangle of largest area.

Denote the intersection of $AD$ and $CE$ by $O$. Then define
$\mathbf{v}_1 = \overrightarrow{OA}$, $\mathbf{v}_2 =
\overrightarrow{OC}$. After an appropriate scaling, we may assume
that $\mathbf{v}_1\wedge \mathbf{v}_2 = \Delta(AOC) = 1$.

Since $E$, $O$, and $C$ are collinear and $D$, $O$, and $A$ are
collinear, we can write $\overrightarrow{DO} = a \cdot
\overrightarrow{OA} = a \mathbf{v}_1$ and $\overrightarrow{EO} = b
\cdot \overrightarrow{OC} = b \mathbf{v}_2$, with $a,\ b > 0$ (see
figure \ref{l3}).

\noindent Using the triangle rule, we obtain that
$\overrightarrow{CD} = -a\mathbf{v}_1 - \mathbf{v}_2$,
$\overrightarrow{DE} = a\mathbf{v}_1 - b\mathbf{v}_2$, and
$\overrightarrow{EA} = \mathbf{v}_1 +b\mathbf{v}_2$.\\
\noindent We know that every vector in the plane can be written as a
linear combination of any two independent vectors. Set
$\overrightarrow{OB} =\mathbf{v}_3 = c \mathbf{v}_1 + d
\mathbf{v}_2$ - refer again to figure \ref{l3}.

\begin{figure}[!htb]
\centering
\includegraphics[scale=1.4]{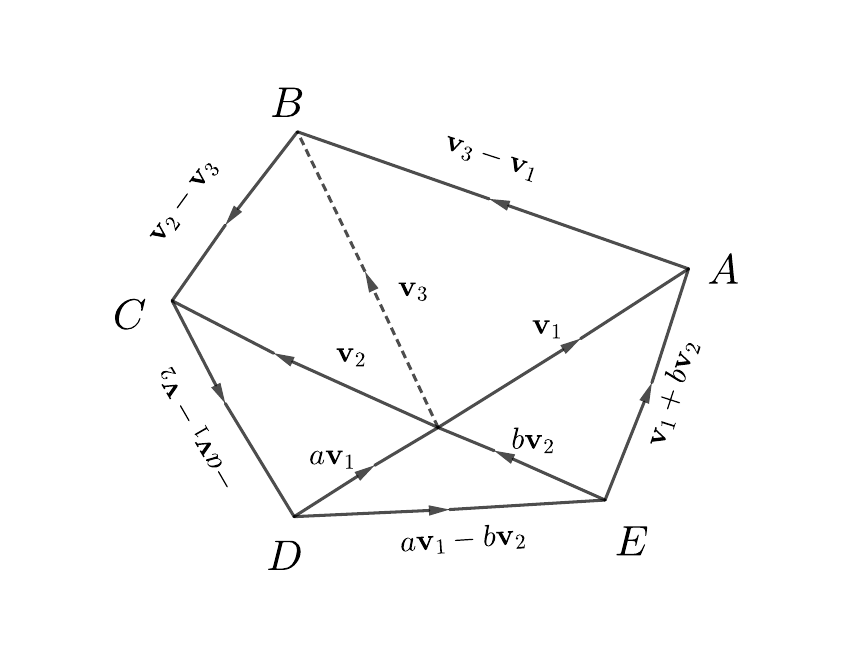}
\vspace{-1cm}
\caption{The setup for the convex pentagon problem }
\label{l3}
\end{figure}

 We also know that
$\overrightarrow{AB} = \mathbf{v}_3 - \mathbf{v}_1$ and
$\overrightarrow{BC} = \mathbf{v}_2 - \mathbf{v}_3$. We have that
\begin{eqnarray*}
\Delta(OAB) &=& \mathbf{v}_1\wedge \mathbf{v}_3 =
\mathbf{v}_1\wedge (c\mathbf{v}_1 + d\mathbf{v}_2) = d,\\
\Delta(OBC)&=& \mathbf{v}_3\wedge\mathbf{v}_2 = (c\mathbf{v}_1 +
d\mathbf{v}_2)\wedge \mathbf{v}_2 = c.
\end{eqnarray*}
After similar calculations, we can write the areas of various
triangles in the pentagon $ABCDE$ in terms of the positive constants
$a,\,b,\,c,\,d$ as shown below:
 $\Delta(OCD) =a\mathbf{v}_1\wedge\mathbf{v}_2 = a, \,\,\Delta(OEA) =
a\mathbf{v}_1\wedge b\mathbf{v}_2 =ab,\,\,\Delta(ODE)=
\mathbf{v}_1\wedge b\mathbf{v}_2 =b.$ We can now compute the total
area of the pentagon.

$\Delta(ABCDE) = \Delta(OAB) + \Delta(OBC) + \Delta(OCD) +
\Delta(ODE) + \Delta(OEA)$, that is,
\begin{equation}\label{ABCDE}
\Delta(K)=\Delta(ABCDE) = a+b+c+d+ab.
\end{equation}
\noindent Next, we compute the areas of the peripheral triangles of the pentagon.
\begin{eqnarray}\label{ears}
\Delta(ABC)&=&\overrightarrow{AB}\wedge\overrightarrow{BC}=(\mathbf{v}_3-\mathbf{v}_1)\wedge(\mathbf{v}_2-\mathbf{v}_3)=c+d-1,\notag\\
\Delta(BCD)&=&\overrightarrow{BC}\wedge\overrightarrow{CD}=(\mathbf{v}_2-\mathbf{v}_3)\wedge(-a\mathbf{v}_1-\mathbf{v}_2)=a-ad+c,\notag\\
\Delta(CDE)&=&\overrightarrow{CD}\wedge\overrightarrow{DE}=(-a\mathbf{v}_1-\mathbf{v}_2)\wedge(a\mathbf{v}_1-b\mathbf{v}_2)=ab+a,\label{ptriangles}\\
\Delta(DEA)&=&\overrightarrow{DE}\wedge\overrightarrow{EA}=(a\mathbf{v}_1-b\mathbf{v}_2)\wedge(\mathbf{v}_1+b\mathbf{v}_2)=ab+b,\notag\\
\Delta(EAB)&=&\overrightarrow{EA}\wedge\overrightarrow{AB}=(\mathbf{v}_1+b\mathbf{v}_2)\wedge(\mathbf{v}_3-\mathbf{v}_1)=b-bc+d.\notag
\end{eqnarray}

Using now assumption \eqref{assumption} we have $\Delta(ABC)\ge \Delta(BCD)$ from which $c+d-1\ge a-ad+c$ and finally after simplifying we obtain that $d\ge 1$.

Similarly, $\Delta(ABC)\ge \Delta(EAB)$ implies that $c+d-1\ge b-bc+d$, which eventually gives that $c\ge 1$.
These two simple inequalities will play a major role in the sequel. We list them below for easy future reference.

\begin{obs}
With the notations introduced above and using \eqref{assumption} it follows that
\begin{equation}\label{cd}
c\geq 1 \qquad \text{and} \qquad d\geq 1.
\end{equation}
\end{obs}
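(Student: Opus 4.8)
The plan is to use only the explicit formulas for the peripheral areas recorded in \eqref{ptriangles} together with the maximality hypothesis \eqref{assumption}. Since $\Delta(ABC)$ is assumed to be the largest of the five peripheral areas, it dominates each of the other four; the trick is to pick the two comparisons that isolate $c$ and $d$ most cleanly. Comparing $\Delta(ABC)$ with the two peripheral triangles adjacent to it, namely $BCD$ and $EAB$, does exactly this, because the formulas $\Delta(BCD)=a-ad+c$ and $\Delta(EAB)=b-bc+d$ differ from $\Delta(ABC)=c+d-1$ in only a single product term each. By contrast, the comparisons with $CDE$ and $DEA$ bring in the mixed term $ab$ and are less convenient, so I would avoid them.

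First I would write out $\Delta(ABC)\ge\Delta(BCD)$ as $c+d-1\ge a-ad+c$. Cancelling $c$ and collecting the remaining terms gives $d-1\ge a(1-d)$, which rearranges to the factored inequality $(d-1)(1+a)\ge 0$. Since $a>0$ by construction, the factor $1+a$ is strictly positive, and the inequality therefore forces $d-1\ge 0$, i.e. $d\ge 1$.

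The bound $c\ge 1$ follows by the symmetric computation. Writing $\Delta(ABC)\ge\Delta(EAB)$ as $c+d-1\ge b-bc+d$, cancelling $d$ and rearranging yields $(c-1)(1+b)\ge 0$; since $b>0$, this gives $c\ge 1$. There is no genuine obstacle here: the statement is a short algebraic consequence of the setup, and the only things to get right are the choice of the two \emph{adjacent} peripheral triangles for the comparison and the observation that the resulting expressions factor, after which the positivity of $a$ and $b$ does all the remaining work.
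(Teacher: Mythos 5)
Your proof is correct and matches the paper's argument exactly: both derive $d\ge 1$ from $\Delta(ABC)\ge\Delta(BCD)$ and $c\ge 1$ from $\Delta(ABC)\ge\Delta(EAB)$ using the formulas in \eqref{ptriangles}, with the positivity of $a$ and $b$ closing the argument. Your explicit factorizations $(d-1)(1+a)\ge 0$ and $(c-1)(1+b)\ge 0$ simply spell out the simplification step the paper leaves implicit.
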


Recall the notation \eqref{periphery} introduced in the first section. It follows that
\begin{align}\label{areaears}
\Omega(K)&=\Delta(ABC)+\Delta(BCD)+\Delta(CDE)+\Delta(DEA)+\Delta(EAB)=\\
&=2(a+b+c+d+ab)-1-ad-bc\notag.
\end{align}
Let us assess the situation for a moment. At this point, we have relatively simple expressions for the
area of the pentagon $ABCDE$, and for the areas of its peripheral triangles, in terms of the variables $a$, $b$, $c$ and $d$.
Notice that while $a$, $b$, $c$ and $d$ are strictly positive they are not completely independent of each other as convexity implies that all five peripheral triangles have positive (signed) areas. The first, third and fourth quantities do already satisfy this condition as $a$ and $b$ are both positive and, as noticed earlier, $c\ge 1$ and $d\ge 1$.  It remains to require that $\Delta(BCD)$ and $\Delta(EAB)$ listed in \eqref{ears} are positive.  Hence, we have the following constraints:
\begin{equation}\label{constraints}
a-ad+c>0,\quad b-bc+d>0,\quad \text{where $a>0$, $b>0$, $c\ge 1$, and $d\ge 1$}.
\end{equation}
In some sense, one can say we are walking the middle road: the areas have quite simple expressions while the side conditions are not that complicated either. The quantities $a$, $b$, $c$, and $d$ are exactly the {\it right variables} we alluded to at the end of the previous section.

What are the values of $a$, $b$, $c$, and $d$ if $K$ is a regular pentagon? It is very easy to see that in this case $OABC$ is a parallelogram so necessarily $\mathbf{v}_3=\mathbf{v}_1+\mathbf{v}_2$, which means that $c=d=1$.
On the other hand, a simple trigonometry exercise shows that $a=b=\frac{1}{2}\sec(\pi/5)=(\sqrt{5}-1)/2$.

We record this observation for future reference.
\begin{obs}\label{extremum}
The pentagon $K$ is affine regular iff $a=b=(\sqrt{5}-1)/2$ and $c=d=1$.
\end{obs}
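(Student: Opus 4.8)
The plan is to show that the quadruple $(a,b,c,d)$ is a \emph{complete affine invariant} of the labeled pentagon $K$, meaning that two convex pentagons share the same parameters if and only if one is the image of the other under an invertible affine map. Granting this, the statement is immediate from the computation preceding it: the regular pentagon has $a=b=(\sqrt5-1)/2$ and $c=d=1$, so a pentagon $K$ realizes exactly these four values precisely when it is affinely equivalent to a regular pentagon, i.e. when it is \emph{affine regular}. Thus the whole Observation reduces to the invariance and the reconstruction described next.

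First I would verify that the construction introduced above is affine covariant. Let $T$ be an invertible affine map with linear part $L$. Since $T$ carries lines to lines, the point $O=AD\cap CE$ is sent to the intersection of the images of those diagonals, so $O$ is built the same way for $T(K)$. Because $L$ preserves ratios of collinear segments, the scalars $a,b$ defined through $\overrightarrow{DO}=a\,\overrightarrow{OA}$ and $\overrightarrow{EO}=b\,\overrightarrow{OC}$ are unchanged; and because $L$ sends the basis $\{\mathbf v_1,\mathbf v_2\}$ to the basis $\{L\mathbf v_1,L\mathbf v_2\}$ while respecting linear combinations, the coordinates $c,d$ in $\overrightarrow{OB}=c\mathbf v_1+d\mathbf v_2$ are unchanged as well. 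The normalization $\mathbf v_1\wedge\mathbf v_2=1$ plays no role, since it only rescales and does not affect these four ratios. I would also record that $T$ multiplies every area by the common factor $\det L$, so the labeling assumption \eqref{assumption} that $ABC$ is the peripheral triangle of largest area is itself affine invariant, and $T(K)$ is labeled compatibly with $K$. Applying this to $K=T(\text{regular pentagon})$ gives the forward implication.

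For the converse I would argue that $(a,b,c,d)$ determines $K$ up to affine equivalence. Placing $O$ at the origin, the five vertices are the explicit vectors $A=\mathbf v_1$, $C=\mathbf v_2$, $D=-a\mathbf v_1$, $E=-b\mathbf v_2$, and $B=c\mathbf v_1+d\mathbf v_2$, i.e. fixed linear combinations of the basis whose coefficients depend only on $a,b,c,d$. If two pentagons $K,K'$ carry the same quadruple but are assembled from bases $\{\mathbf v_1,\mathbf v_2\}$ and $\{\mathbf v_1',\mathbf v_2'\}$, then the unique linear map $L$ with $L\mathbf v_1=\mathbf v_1'$ and $L\mathbf v_2=\mathbf v_2'$ (well defined since $\mathbf v_1\wedge\mathbf v_2=1\neq0$) carries each vertex of $K$ onto the corresponding vertex of $K'$, so $K'=L(K)$. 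Hence equal parameters force affine equivalence, and together with the first step and the regular-pentagon computation this closes both directions of the iff.

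The main obstacle is not difficulty but the bookkeeping around the labeling and orientation conventions. One must confirm that the affine invariance of \eqref{assumption} resolves the freedom in selecting which peripheral triangle is largest — a genuine ambiguity for the regular pentagon, where all five areas coincide, but one that by symmetry returns the same quadruple under every admissible cyclic choice — and that orientation-reversing affine maps are either excluded from the meaning of \emph{affine regular} or absorbed by a cyclic relabeling consistent with the counterclockwise convention. Once these conventions are pinned down, both implications follow at once from affine covariance and the explicit reconstruction above.
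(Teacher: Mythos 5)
Your proposal is correct, and it supplies strictly more than the paper itself records. The paper's entire ``proof'' of Observation \ref{extremum} is the computation immediately preceding it: for a regular pentagon, $OABC$ is a parallelogram, forcing $\mathbf{v}_3=\mathbf{v}_1+\mathbf{v}_2$ and hence $c=d=1$, while a trigonometric computation gives $a=b=\tfrac12\sec(\pi/5)=(\sqrt5-1)/2$; both the affine invariance of the quadruple $(a,b,c,d)$ and the converse implication (that these values force affine regularity) are left entirely implicit, justified at most by the earlier remark in Section 2 that the problem is affine invariant. You make exactly those implicit steps explicit: the affine covariance argument (ratios along the diagonals and coordinates in the transported basis are preserved) gives the forward direction, and your reconstruction step --- placing $O$ at the origin so the vertices become $\mathbf{v}_1$, $c\mathbf{v}_1+d\mathbf{v}_2$, $\mathbf{v}_2$, $-a\mathbf{v}_1$, $-b\mathbf{v}_2$, and invoking the linear map carrying one basis to the other --- is the genuine mathematical content of the ``only if'' direction, which the paper never argues. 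Your attention to the two bookkeeping points is also apt: the normalization $\mathbf{v}_1\wedge\mathbf{v}_2=1$ being positive under the counterclockwise convention means the connecting linear map is orientation-preserving, and the ambiguity in \eqref{assumption} for the regular pentagon (all five peripheral areas equal) is harmless by its dihedral symmetry. In short, the paper buys brevity by treating the equivalence as self-evident; your version costs half a page and buys an actual proof of the ``iff,'' which matters since the equality case of the main theorem rests on this observation.
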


It remains to see how $\Delta(K_1)$ looks in terms of these variables. Before doing that we prove a result that is going to be needed later.

\begin{lemma}\label{Omegaga}
For every convex pentagon $K$ we have that $\Omega(K)<2\Delta(K)$. The constant $2$ cannot be replaced by a smaller one.
\end{lemma}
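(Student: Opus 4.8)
The plan is to read both sides of the inequality straight off the formulas established in the setup and to exploit a fortuitous cancellation. From \eqref{ABCDE} we have $\Delta(K)=a+b+c+d+ab$, while \eqref{areaears} gives $\Omega(K)=2(a+b+c+d+ab)-1-ad-bc$. Subtracting, the bulk of the terms cancel and we are left with
\begin{equation*}
2\Delta(K)-\Omega(K)=1+ad+bc.
\end{equation*}
Since $a,b>0$ and $c,d\ge 1$ by \eqref{cd}, every term on the right is positive, so $2\Delta(K)-\Omega(K)\ge 1>0$, which is exactly the claimed strict inequality $\Omega(K)<2\Delta(K)$. This first half requires essentially no work beyond the algebra already in place.

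The second half — that $2$ is best possible — I would establish by exhibiting a one-parameter family of genuine convex pentagons along which $\Omega(K)/\Delta(K)\to 2$. Rewriting the identity above as
\begin{equation*}
\frac{\Omega(K)}{\Delta(K)}=2-\frac{1+ad+bc}{\Delta(K)},
\end{equation*}
it suffices to make the subtracted fraction tend to $0$; equivalently, to drive $\Delta(K)$ to infinity far faster than $1+ad+bc$. The natural candidate is to keep $c=d=1$ (their minimal admissible values) and let $a=b=t\to\infty$. I would first confirm that this choice is admissible for every $t>0$: the constraints in \eqref{constraints} reduce to $a-ad+c=1>0$ and $b-bc+d=1>0$, both of which hold, so each such $(a,b,c,d)$ genuinely corresponds to a convex pentagon.

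With $c=d=1$ and $a=b=t$ one computes $\Delta(K)=t^2+2t+2$ and $1+ad+bc=1+2t$, whence
\begin{equation*}
\frac{\Omega(K)}{\Delta(K)}=2-\frac{1+2t}{t^2+2t+2}\longrightarrow 2\qquad(t\to\infty).
\end{equation*}
Thus no constant smaller than $2$ can work, which completes the argument. I do not anticipate a serious obstacle: the only point deserving care is verifying that the sharpness family stays convex throughout (handled by the constraint check above). The supremum is approached only in the limit and is never attained by an actual pentagon, which is consistent with the inequality being strict.
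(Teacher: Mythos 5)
Your proposal is correct and follows essentially the same route as the paper: both read off the identity $\Omega(K)=2\Delta(K)-(1+ad+bc)$ from \eqref{ABCDE} and \eqref{areaears}, and both establish sharpness with the family $a=b=t$, $c=d=1$ (the paper's ratio $(2t^2+2t+3)/(t^2+2t+2)$ is exactly your $2-(1+2t)/(t^2+2t+2)$). The only differences are cosmetic: you verify the convexity constraints \eqref{constraints} a bit more explicitly, while the paper additionally includes a preliminary averaging argument giving the weaker constant $2.5$, which your proof rightly skips as unnecessary.
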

\begin{proof} Let us first prove a slightly weaker inequality. Every vertex of the pentagon $K$ is incident to exactly two non-overlapping peripheral triangles. Obviously, the sum of the areas of these two triangles is smaller than the area of the pentagon. Averaging these inequalities over all five vertices it follows that $\Omega(K)<2.5\Delta(K)$.
To see that $2.5$ can be replaced by $2$ it is enough to compare relations \eqref{ABCDE} and \eqref{areaears}. Indeed, we have
\begin{equation*}
\Omega(K)=2\Delta(K)-(1+ad+bc)<2\Delta(K).
\end{equation*}

Finally, let us show that $2$ is the best possible constant. For some $x>0$, take $a=b=x$ and $c=d=1$.
Then, conditions \eqref{constraints} are satisfied, so $K$ is a convex pentagon.
But then using \eqref{ABCDE} and \eqref{areaears} again we have that
$$\frac{\Omega(K)}{\Delta(K)}=\frac{2x^2+2x+3}{x^2+2x+2},$$
and this ratio can be arbitrarily close to $2$ if $x$ is large enough.
\end{proof}
\end{section}
%%%%%%%%%%%%%%%%%%%%%%%%%%%%%%%%%%%%%%%%%%%%%%%%%%%%%%%%%%%%%%%%%%%%%%%%%%%%%%%%%%%%%%%%%%%%%%%%%%%%%%%%%%%%%%%

\begin{section}{\bf Computing $\Delta(K_1)$}

In this section we compute the areas of the marginal triangles of the pentagon $ABCDE$. According to figure \ref{l4}, these are
$ABM$, $BCN$, $CDO$, $DEP$ and $EAQ$. Since the technique is the same in all five cases we are going to present the details just for the first marginal triangle. We are going to refer often to figure \ref{l3} from the previous section. Also recall that $\mathbf{v}_1\wedge\mathbf{v}_2=1$, and since $\mathbf{v}_3=c\mathbf{v}_1+d\mathbf{v}_2$ we also have that $\mathbf{v}_1\wedge\mathbf{v}_3=d$, and $\mathbf{v}_2\wedge\mathbf{v}_3=-c$.

Note that $\Delta(ABM)= \overrightarrow{AB}\wedge\overrightarrow{AM}$. We already know that that $\overrightarrow{AB}=\mathbf{v}_3-\mathbf{v}_1$. Since $A$, $M$ and $C$ are collinear, there exists a constant $\lambda \in (0,1)$ such that $\overrightarrow{AM}=\lambda\overrightarrow{AB}$. Similarly, since $B$, $M$ and $D$ are collinear, there exists a constant $\mu \in (0,1)$ such that $\overrightarrow{MB}=\lambda\overrightarrow{DB}$.
It follows that
\begin{equation*}
\overrightarrow{AM}+\overrightarrow{MB}=\overrightarrow{AB} \rightarrow \lambda(\mathbf{v}_2-\mathbf{v}_1)+\mu(a\mathbf{v}_1+\mathbf{v}_3)=\mathbf{v}_3-\mathbf{v}_1.
\end{equation*}
Multiplying both sides by $(a\mathbf{v}_1+\mathbf{v}_3)$ and using the properties of the outer product we obtain that
\begin{equation*}
\lambda(\mathbf{v}_2-\mathbf{v}_1)\wedge(a\mathbf{v}_1+\mathbf{v}_3)=(\mathbf{v}_3-\mathbf{v}_1)\wedge(a\mathbf{v}_1+\mathbf{v}_3),
\end{equation*}
which after expanding gives
\begin{equation*}
\lambda(-a-c-d)=-ad-d \quad \text{that is}, \quad \lambda=\frac{d(a+1)}{a+c+d}\longrightarrow \overrightarrow{AM}=\frac{d(a+1)}{a+c+d}(\mathbf{v}_2-\mathbf{v}_1).
\end{equation*}
Using now that $\Delta(ABM)= \overrightarrow{AB}\wedge\overrightarrow{AM}$ we finally obtain that
\begin{equation*}
\Delta(ABM)=(\mathbf{v}_3-\mathbf{v}_1)\wedge\frac{d(a+1)}{a+c+d}(\mathbf{v}_2-\mathbf{v}_1)=\frac{d(a+1)(c+d-1)}{a+c+d}.
\end{equation*}

The areas of the remaining four marginal triangles are computed in a similar fashion. We omit the straightforward
calculations, but list all five expressions for easy future reference.

\begin{align}\label{marginals}
\Delta(ABM)&=\frac{d(a+1)(c+d-1)}{a+c+d},\,\Delta(BCN)=\frac{c(a+c-ad)}{a+c},\,\Delta(CDO)=a,\notag\\
\Delta(DEP)&=\frac{b(ab+ad+bc)}{b+d},\,\Delta(EAQ)=\frac{(1+b)(b+d-bc)}{b+c+d}.
\end{align}
\vspace{-2cm}
\begin{figure}[!htb]
\centering
\includegraphics[scale=2.65]{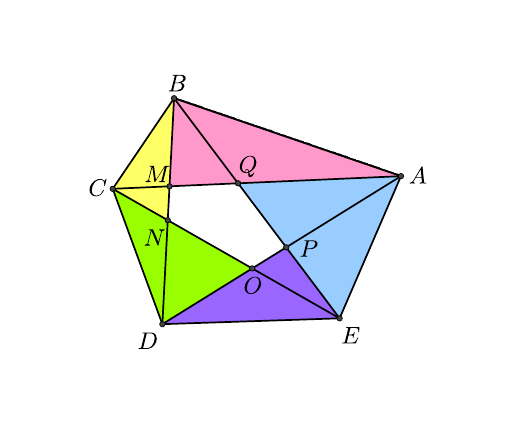}
\vspace{-1cm}
\caption{The marginal triangles}
\label{l4}
\end{figure}

Recall that $K_1=MNOPQ$ and $\Delta(K_1)=\Delta(K)-\Phi(K)$, where $\Phi(K)$
is the sum of the areas of the marginal triangles given in \eqref{marginals} above - see \eqref{periphery}.
A simple calculation shows that Theorem \ref{main} is equivalent to showing that $\Phi(K)\ge (3\sqrt{5}-5)\Delta(K)/2$.

Combining now relations \eqref{periphery}, \eqref{constraints}, \eqref{ABCDE} and \eqref{marginals}
we restate Theorem \ref{main} in an equivalent purely algebraic form.

\begin{thm} Let $a>0$, $b>0$, $c\ge 1$ and $d\ge 1$ be numbers such that $a-ad+c>0$ and $b-bc+d>0$. Then
\begin{align}\label{algebra}
\frac{d(a+1)(c+d-1)}{a+c+d}&+\frac{c(a+c-ad)}{a+c}+a+\frac{b(ab+ad+bc)}{b+d}+\\
&+\frac{(1+b)(b+d-bc)}{b+c+d}- \frac{3\sqrt{5}-5}{2}(a+b+c+d+ab)\ge 0,\notag
\end{align}
with equality if and only if $a=b=(\sqrt{5}-1)/2$ and $c=d=1$.
\end{thm}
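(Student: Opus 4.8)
Write $F(a,b,c,d)$ for the left-hand side of \eqref{algebra}, viewed as a function on the open region $\mathcal R\subset\mathbb R^4$ cut out by $a,b>0$, $c,d\ge 1$ and the two inequalities in \eqref{constraints}. Put $\tau:=(\sqrt5-1)/2$, so that $\tau^2=1-\tau$ and the constant in \eqref{algebra} equals $3\tau-1$; the claim is that $F\ge 0$ on $\mathcal R$ with equality exactly at $P_0=(\tau,\tau,1,1)$. The plan is to treat this as a constrained minimization and to show that $P_0$ is the unique minimizer, with $F(P_0)=0$. That last value is a quick check: at $P_0$ each of the five marginal areas in \eqref{marginals} equals $\tau$, so $\Phi=5\tau$, while \eqref{ABCDE} gives $\Delta=\tau+3$, and $(3\tau-1)(\tau+3)=3\tau^2+8\tau-3=5\tau$ upon using $\tau^2=1-\tau$.

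Before optimizing I would record the reflection symmetry $\sigma:(a,b,c,d)\mapsto(b,a,d,c)$. Geometrically $\sigma$ is the mirror image of $K$ that fixes $B$ and interchanges $A\leftrightarrow C$ and $D\leftrightarrow E$; it preserves \eqref{assumption}, maps $\mathcal R$ to itself, permutes the marginal triangles among themselves, and hence fixes both $\Phi$ and $\Delta$. Thus $F\circ\sigma=F$, and the candidate $P_0$ is a fixed point of $\sigma$. I would use this symmetry both to halve the eventual case analysis and to anticipate that the extremal configuration is symmetric, $a=b$ and $c=d$, in agreement with Observation \ref{extremum}.

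The first technical step is to confine the minimum to a compact set. I would show $\liminf F\ge 0$ (often $F\to+\infty$) along every escape to the boundary of $\mathcal R$: as $a\to\infty$ or $b\to\infty$ (possible only when $d=1$, resp.\ $c=1$, by \eqref{constraints}) the term $\Delta$ and the product $ab$ in the fourth term of \eqref{marginals} dominate and the positive factor $1-(3\sqrt5-5)/2>0$ forces $F\to+\infty$; as $c\to\infty$ or $d\to\infty$ the constraints drag the remaining variables and a direct estimate again gives $F\ge 0$; and on the two slack faces $a-ad+c\to 0^+$ and $b-bc+d\to 0^+$ the corresponding numerators in \eqref{marginals} vanish, reducing $F$ to a lower-dimensional expression that one checks is strictly positive, so no minimizer lies there. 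The minimum is then attained at some $P^*$ that is either interior to $\mathcal R$ or lies on the face $c=1$ or the face $d=1$, and I would write the Kuhn--Tucker conditions $\partial_aF=\partial_bF=0$ together with $\partial_cF\ge0$ (equality if $c>1$) and $\partial_dF\ge0$ (equality if $d>1$), the inequalities of \eqref{constraints} being strict at $P^*$ by the previous sentence.

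The heart of the proof, and the step I expect to be the main obstacle, is solving this stationarity system and showing its only solution in $\mathcal R$ is $P_0$. After clearing the denominators $(a+c+d)(a+c)(b+d)(b+c+d)$ the equations $\partial_aF=\partial_bF=0$ become polynomial relations of substantial degree, and there is genuinely no convexity to exploit: a direct computation shows that along the diagonal $c=d$ with $a=b=1$ the restriction of $F$ is not convex and in fact has an interior conditional minimum, so one cannot simply push $c,d$ down to $1$. I would therefore attack the system by elimination, using $\sigma$ to first analyze the symmetric locus $a=b,\ c=d$, where the four stationarity relations collapse to two equations whose only feasible root is $a=\tau$, $c=1$, and then rule out asymmetric critical points by a resultant or monotonicity argument. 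Finally, on the expected active face $c=d=1$ the problem reduces to minimizing the symmetric two-variable function $F(a,b,1,1)=\frac{a+1}{a+2}+\frac1{a+1}+a+\frac{b(ab+a+b)}{b+1}+\frac{b+1}{b+2}-(3\tau-1)(ab+a+b+2)$, and I would close the argument by showing this is minimized only at $a=b=\tau$ and that there $\partial_cF\ge0$ and $\partial_dF\ge0$, so that $P_0$ realizes the global minimum value $0$ and the inequality, together with its equality case, follows.
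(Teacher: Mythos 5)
Your preliminary computations are sound: the value check $F(\tau,\tau,1,1)=0$ via $\Phi=5\tau$, $\Delta=\tau+3$, the symmetry $F(a,b,c,d)=F(b,a,d,c)$ (which is the paper's $f(c,d,x,y)=f(d,c,y,x)$), and the reduction of $F$ on the face $c=d=1$ are all correct. But as a proof the proposal has a genuine gap, and it is exactly where you say you expect "the main obstacle": the stationarity system is never solved, the exclusion of asymmetric critical points is only gestured at ("a resultant or monotonicity argument"), and even the two-variable minimization of $F(a,b,1,1)$ is deferred. After clearing the denominators $(a+c+d)(a+c)(b+d)(b+c+d)$, the equations $\partial_aF=\partial_bF=0$ together with the Kuhn--Tucker conditions in $c,d$ form a system of high-degree polynomial equations in four variables with coefficients involving $\sqrt5$; no elimination or resultant computation is exhibited, and there is no reason offered why such a computation would be tractable or would have $P_0$ as its only feasible solution. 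The same applies to the compactification step: the claims that $F\to+\infty$ or $F\ge 0$ along every escape to the boundary (e.g.\ $c\to\infty$ forcing $b<d/(c-1)$ via \eqref{constraints}, or the slack faces $a-ad+c\to 0^+$) are plausible but each is an unproven multi-case estimate. In short, what you have is a plan whose hard steps coincide with the hard steps of the problem; the paper itself warns that "any attempt of proving inequality \eqref{algebra} via any of the standard methods (partial derivatives, Lagrange multipliers etc.)\ is destined to fail," and your proposal is precisely that route.

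For comparison, the paper avoids critical-point analysis entirely. It substitutes $x=a(\sqrt5+1)/2$, $y=b(\sqrt5+1)/2$ (so the extremum moves to $c=d=x=y=1$), clears denominators, rescales by $(7+3\sqrt5)/8$, and then proves the resulting $73$-term polynomial $f(c,d,x,y)\ge 0$ by exhibiting the explicit certificate \eqref{Qs}: a decomposition into squared factors such as $(x-c)^2$, $(xy-cd)^2$, $(c-d)^2$ multiplied by polynomials $Q_k$ in $c-1$, $d-1$, $x$, $y$ with nonnegative coefficients, found by solving a linear system in Maple. Notably, this certificate shows the constraints $a-ad+c>0$ and $b-bc+d>0$ are never needed — the inequality is unconstrained on $a,b>0$, $c,d\ge1$ — whereas your argument leans on them for the boundary analysis. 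One merit of your outline worth keeping is the observation that $F$ fails to be convex along natural slices, which explains \emph{why} a naive descent-to-the-boundary argument cannot work; but to turn the proposal into a proof you would either have to carry out the full elimination (likely infeasible by hand) or find a positivity certificate of the paper's type.
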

At this juncture, it is reasonable to ask ourselves whether the inequalities $a-ad+c>0$ and $b-bc+d>0$
play any role whatsoever; it turns out that the answer is negative! This makes our job much easier since we are
dealing with an unconstrained optimization problem.

Nevertheless, the problem is nowhere close to being solved. Any attempt of proving inequality \eqref{algebra} via any of the standard methods (partial derivatives, Lagrange multipliers etc.) is destined to fail. We will have to try something else. First, we replace the variables $a$ and $b$ by two new variables, $x$ and $y$, defined by
\begin{equation}
x:=a\cdot\frac{\sqrt{5}+1}{2},\qquad y:=b\cdot\frac{\sqrt{5}+1}{2}.
\end{equation}
The reason for this substitution is that in the extremal case we have $a=b=(\sqrt{5}-1)/2$, that is, $x=y=1$ and therefore the expressions are going to be simpler.
Second, we clear the denominators of the left hand term in \eqref{algebra} and further we multiply this term by $(7+3\sqrt{5})/8$. This operation reduces the complexity of the coefficients of the resulting polynomial.

After these operations, Theorem \ref{main} becomes equivalent to the following

\begin{thm} Let $x>0$, $y>0$, $c\ge 1$, $d\ge 1$, and let $f(c,d,x,y)$ be the four-variable polynomial given below.
{\small{
\begin{align*}
&\phantom{aai}(14-6\sqrt{5})(c^2xy^3+d^2x^3y+cdx^3y+cdxy^3+cx^3y+dxy^3)+(32-12\sqrt{5})(c^2x^2y^2+d^2x^2y^2)+\\
&+(12\sqrt{5}-48)(c^2xy^2+d^2x^2y)+(12\sqrt{5}-24)(c^2x^2y+d^2xy^2)+(20\sqrt{5}-44)(cx^2y^3+dx^3y^2)+\\
&+(28-12\sqrt{5})(cxy^3+dx^3y)+(10\sqrt{5}-22)(cx^3y^2+dx^2y^3)+(48-24\sqrt{5})(cx^2y^2+dx^2y^2)+\\
&+(4+4\sqrt{5})(cxy^2+dx^2y)+(6\sqrt{5}-6)(c^3dx+cd^3y)+(6+2\sqrt{5})(c^3dy^2+cd^3x^2)+\\
&+(2\sqrt{5}-26)(c^3dy+cd^3x)+(8\sqrt{5}-4)(c^3xy^2+d^3x^2y)+(18-6\sqrt{5})(c^3xy+d^3xy)+\\
&+(6\sqrt{5}-30)(c^2d^2x+c^2d^2y)+(18-6\sqrt{5})(c^2dx^2+cd^2y^2)+(18-14\sqrt{5})(c^2dy^2+cd^2x^2)+\\
&+(8+4\sqrt{5})(c^2dy+cd^2x)+(4\sqrt{5}-8)(cdx^3+cdy^3)+(4\sqrt{5}-8)(c^2y^3+d^2x^3)+\\
&+(6+2\sqrt{5})(c^2y^2+d^2x^2)+(10\sqrt{5}-22)(x^3y^2+x^2y^3)+(6+2\sqrt{5})(c^4y^2+d^4x^2)+\\
&+(2\sqrt{5}-2)(c^4y+d^4x)+(8\sqrt{5}-16)(c^2dx^2y+cd^2xy^2)+(14\sqrt{5}-22)(c^2dxy^2+cd^2x^2y)+\\
&+(36-36\sqrt{5})(c^2dxy+cd^2x^2y)+(6-2\sqrt{5})(c^3dxy+cd^3xy)+(28\sqrt{5}-80)(cdx^2y+cdxy^2)+\\
&+12(c^3d^2+c^2d^3)+4(c^4d+cd^4)-8\sqrt{5}(c^3y^2+d^3x^2)+4x^2y^2+(18+6\sqrt{5})cdxy+\\
&+(12-4\sqrt{5})c^2d^2xy+(36-16\sqrt{5})x^3y^3+(70-30\sqrt{5})cdx^2y^2.
\end{align*}}}
Then $f(c,d,x,y)\ge 0$ with equality if and only if $c=d=x=y=1$.
\end{thm}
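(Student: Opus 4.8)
The plan is to first absorb the two genuine constraints $c\ge 1$ and $d\ge 1$ into sign conditions by setting $c=1+u$, $d=1+v$ with $u\ge 0$, $v\ge 0$, while keeping $x>0$, $y>0$. After this substitution $f$ becomes a degree-six polynomial $F(u,v,x,y)$, and the conjectured equality point $c=d=x=y=1$ becomes $u=v=0$, $x=y=1$. The objective is then to display $F$ as a combination of \emph{manifestly nonnegative} pieces on the region $u,v\ge 0$, $x,y>0$, namely products of the nonnegative quantities $u,v,x,y$ with squares of polynomials. Throughout I would exploit the symmetry of $f$ under the simultaneous swap $(c,x)\leftrightarrow(d,y)$ — equivalently $(u,x)\leftrightarrow(v,y)$ — which one checks monomial by monomial (e.g. $c^2xy^3\leftrightarrow d^2x^3y$); this halves the bookkeeping and tells us to group the paired monomials together.

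The natural way to organize $F$ is by its degree in $u$ and $v$, writing $F=\sum_{i,j\ge0}u^i v^j\,P_{ij}(x,y)$, where the symmetry forces $P_{ij}(x,y)=P_{ji}(y,x)$. The cleanest outcome to hope for is that $F$ is nondecreasing in each of $u$ and $v$ on the region, so that the minimum over $u,v\ge0$ sits at $u=v=0$ and the four-variable problem collapses to the two-variable slice $P_{00}(x,y)=f(1,1,x,y)\ge0$; failing full monotonicity, one at least tries to show each coefficient $P_{ij}$ is nonnegative for $x,y>0$, since $u,v\ge0$. The genuinely delicate piece is the slice $P_{00}$ itself: because the minimum $x=y=1$ is \emph{interior} to the quadrant, $P_{00}$ vanishes to second order there and hence cannot be a positive combination of monomials. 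For $P_{00}$ one must instead produce a true weighted sum-of-squares certificate $P_{00}=\sum_k m_k(x,y)\,q_k(x,y)^2$ with monomials $m_k$ and polynomials $q_k$ whose only common zero is $(1,1)$; the $x\leftrightarrow y$ symmetry again constrains the ansatz and is the right tool for pinning the $q_k$ down.

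The main obstacle is that a level-by-level argument almost certainly fails, since several coefficients $P_{ij}$ will go negative somewhere in the quadrant, so the powers of $u$ and $v$ cannot be handled independently. The real work is to \emph{borrow between levels}: combine a troublesome linear-in-$u$ term with the $u$-free term $P_{00}$ and an $u^2$ term by completing a square in $u$ whose coefficients are positive polynomials in $x,y$, and symmetrically in $v$. In effect one is constructing a Positivstellensatz-type certificate for the full $F$, and the hard part is finding it explicitly. I would search for it by matching coefficients against a symmetry-respecting sum-of-squares ansatz — the suspiciously clean coefficients point to a closed form (note for instance that $14-6\sqrt5=(3-\sqrt5)^2$ and that the extremal value hinges on $(\sqrt5-1)/2=1/\phi$) — solve the resulting linear system for the unknown squares, and then verify the identity by direct expansion. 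Once a decomposition into nonnegative summands is in hand, $F\ge0$ is immediate, and equality forces every summand to vanish; since the squares vanish simultaneously only at $x=y=1$ and the borrowed factors force $u=v=0$, this pins down $c=d=x=y=1$ as the unique equality case.
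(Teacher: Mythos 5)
Your plan is essentially the paper's own proof: the authors also certify $f\ge 0$ by writing $f$ as a weighted sum of squares $\sum_k s_k\,Q_k + Q_0$, where the squares are $(x-1)^2,(y-1)^2,(x-c)^2,(y-d)^2,(x-d)^2,(y-c)^2,(c-d)^2,(x-y)^2,(xy-1)^2,(xy-cd)^2$ and the weights $Q_k$ are polynomials in $c-1,\,d-1,\,x,\,y$ with positive coefficients (so nonnegativity follows exactly from your $u=c-1\ge 0$, $v=d-1\ge 0$, $x,y>0$ observation), and they find the certificate just as you propose, by imposing the symmetry $f(c,d,x,y)=f(d,c,y,x)$ on the ansatz, matching coefficients, and solving the resulting linear system for a nonnegative solution (with Maple), after which equality is pinned down term by term to $c=d=x=y=1$. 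The only remaining work in your version is to actually exhibit the decomposition, which is the computational step the paper carries out explicitly.
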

The first reaction when seeing $f(c,d,x,y)$ is that it is a really horrible expression. At a second thought,
it is not that bad. After all, $f(c,d,x,y)$ is a polynomial with $73$ terms whose monomials have degrees 
$4$, $5$, and $6$. What makes it look worse are the coefficients since they contain a lot of $\sqrt{5}$'s. 
However, there is some symmetry since $f(c,d,x,y)=f(d,c,y,x)$.
This is because the areas does not change if you flip the pentagon. In the end, showing that $f(c,d,x,y)\ge 0$
does not seem such an impossible task.

How exactly are we going to achieve this? The idea is to write $f(c,d,x,y)$ as a sum of nonnegative terms, all of which vanish when $c=d=x=y=1$.

In particular, we will be able to express $f(c,d,x,y)$ as a sum of terms of the following form
\begin{align}\label{fasasum}
f(c,d,x,y)=\sum_{J} &p_J\,x^{i_1}y^{i_2}(c-1)^{i_3}(d-1)^{i_4}(x-1)^{2i_5}(y-1)^{2i_6}(x-c)^{2i_7}(y-d)^{2i_8}\cdot \notag\\
&\cdot(x-d)^{2i_9}(y-c)^{2i_{10}}(c-d)^{2i_{11}}(x-y)^{2i_{12}}(xy-1)^{2i_{13}}(xy-cd)^{2i_{14}},
\end{align}
with coefficients $p_J\ge 0$ for every choice of $J=(i_1,i_2,i_3,i_4,i_5,i_6,i_7,i_8,i_9,i_{10},i_{11},i_{12},i_{13},i_{14})$.

Here $i_k$ are nonnegative integers for all $1\le k\le 14$. Moreover, for any choice of $J$ we will have $i_5+i_6+\ldots+i_{13}+i_{14}\le 1$, that is,
at most one of the last 10 factors will appear in any particular term. As a result, we were able to write $f(c,d,x,y)$ as follows

\begin{align}\label{Qs}
f(c,d,x,y)=(x-1)^2\,Q_5&+(y-1)^2\,Q_6+(x-c)^2\,Q_7+(y-d)^2\,Q_8+(x-d)^2\,Q_9+(y-c)^2\,Q_{10}+\notag\\
&+(c-d)^2\,Q_{11}+(x-y)^2\,Q_{12}+(xy-1)^2\,Q_{13}+(xy-cd)^2\,Q_{14}+Q_0,
\end{align}
where $Q_0, Q_5, Q_6,\ldots Q_{14}$ are polynomials in $c-1$, $d-1$, $x$ and $y$ all whose coefficients are positive.
These polynomials were obtained by identifying the coefficients of similar monomials on both sides of \eqref{fasasum}
and then locating a nonnegative solution of the resulting linear system. We used the symbolic algebra software Maple to perform
these calculations.

Moreover, since $f(c,d,x,y)=f(d,c,y,x)$ we were able to find symmetric solutions which satisfy $Q_5(c,d,x,y)=Q_6(d,c,y,x)$, $Q_7(c,d,x,y)=Q_8(d,c,y,x)$, and $Q_9(c,d,x,y)=Q_{10}(d,c,y,x)$. In addition, $Q_k(c,d,x,y)=Q_k(d,c,y,x)$ for $k\in \{0, 11,12,13,14\}$. We list these polynomials below.

{\small
\begin{align*}
Q_5=&4y+(53-23\sqrt{5})xy+(98-36\sqrt{5})(d-1)y+(72-31\sqrt{5})(d-1)xy+(75-24\sqrt{5})(d-1)^2+\\
&+(8+8\sqrt{5})(d-1)^2y+(4\sqrt{5}-8)(d-1)^2x+(67-27\sqrt{5})(d-1)^3+(2+2\sqrt{5})(d-1)^3y+\\
&+4(d-1)^4+(36\sqrt{5}-78)(c-1)y+(28\sqrt{5}-62)(c-1)(d-1)+(21-\sqrt{5})(c-1)(d-1)^2+\\
&+(24\sqrt{5}-48)(c-1)(d-1)^2y+(12-4\sqrt{5})(c-1)(d-1)^3+(12-4\sqrt{5})(c-1)^2(d-1)y.
\\
Q_6=&4x+(53-23\sqrt{5})xy+(98-36\sqrt{5})(c-1)x+(72-31\sqrt{5})(c-1)yx+(75-24\sqrt{5})(c-1)^2+\\
&+(8+8\sqrt{5})(c-1)^2x+(4\sqrt{5}-8)(c-1)^2y+(67-27\sqrt{5})(c-1)^3+(2+2\sqrt{5})(c-1)^3x+\\
&4(c-1)^4+(36\sqrt{5}-78)(d-1)x+(28\sqrt{5}-62)(c-1)(d-1)+(21-\sqrt{5})(d-1)(c-1)^2+\\
&+(24\sqrt{5}-48)(d-1)(c-1)^2x+(12-4\sqrt{5})(d-1)(c-1)^3+(12-4\sqrt{5})(d-1)^2(c-1)x.
\end{align*}}

{\small
\begin{align*}
Q_7=&(65-26\sqrt{5})+(8\sqrt{5}-16)x+(40\sqrt{5}-88)xy^2+(108-40\sqrt{5})(d-1)+(12\sqrt{5}-24)(d-1)x+\\
&+(20\sqrt{5}-44)(d-1)xy^2+(14-6\sqrt{5})(d-1)^2xy+(3\sqrt{5}-3)(d-1)^3+12(c-1)+\\
&+(4\sqrt{5}-8)(c-1)x+(30-13\sqrt{5})(c-1)xy+(10\sqrt{5}-22)(c-1)xy^2+(7\sqrt{5}-5)(c-1)(d-1)\\
&+(4\sqrt{5}-8)(c-1)(d-1)x+(14-6\sqrt{5})(c-1)(d-1)xy+(15-3\sqrt{5})(c-1)(d-1)^2+\\
&+(24\sqrt{5}-50)(c-1)^2y.
\\
Q_8=&(65-26\sqrt{5})+(8\sqrt{5}-16)y+(40\sqrt{5}-88)yx^2+(108-40\sqrt{5})(c-1)+(12\sqrt{5}-24)(c-1)y+\\
&+(20\sqrt{5}-44)(c-1)yx^2+(14-6\sqrt{5})(c-1)^2yx+(3\sqrt{5}-3)(c-1)^3+12(d-1)+\\
&+(4\sqrt{5}-8)(d-1)y+(30-13\sqrt{5})(d-1)yx+(10\sqrt{5}-22)(d-1)yx^2+(7\sqrt{5}-5)(c-1)(d-1)+\\
&+(4\sqrt{5}-8)(d-1)(c-1)y+(14-6\sqrt{5})(c-1)(d-1)xy+(15-3\sqrt{5})(d-1)(c-1)^2+\\
&+(24\sqrt{5}-50)(d-1)^2x.
\\
Q_9&=(20\sqrt{5}-43)+(24\sqrt{5}-48)(d-1)+(31-9\sqrt{5})(c-1)(d-1)+4(c-1)(d-1)^2.
\\
Q_{10}&=(20\sqrt{5}-43)+(24\sqrt{5}-48)(c-1)+(31-9\sqrt{5})(d-1)(c-1)+4(d-1)(c-1)^2.
\\
Q_{11}&=(2+2\sqrt{5})(x^2+y^2)+(32\sqrt{5}-38)xy+(70-30\sqrt{5})(x+y)xy++(4\sqrt{5}-8)(c-1)(d-1)xy+\\
&+(8\sqrt{5}-12)x^2y^2+(50-14\sqrt{5})(c-1+d-1)xy+(2+2\sqrt{5})((c-1)y^2+(d-1)x^2).
\\
Q_{12}&=(17-7\sqrt{5})xy+(\sqrt{5}-2)(c-1+d-1)xy+(66-24\sqrt{5})(c-1)(d-1).
\\
Q_{13}&=(30\sqrt{5}-59)+(36-16\sqrt{5})xy+(64\sqrt{5}-130)(c-1+d-1)+(66\sqrt{5}-130)(c-1)(d-1).
\\
Q_{14}&=47-18\sqrt{5}.
\\
Q_0&=(71\sqrt{5}-152)((c-1)^3+(d-1)^3)xy+(2+2\sqrt{5})((c-1)^4y^2+(d-1)^4x^2)+\\
&+(16-4\sqrt{5})((c-1)^3y+(d-1)^3x)xy+(2+2\sqrt{5})(c-1)(d-1)((c-1)^2x+(d-1)^2y)+\\
&+(6\sqrt{5}-6)(c-1)(d-1)((c-1)^2y^2+(d-1)^2x^2)+(56-22\sqrt{5})((c-1)^4y+(d-1)^4x)+\\
&+(24\sqrt{5}-36)((c-1)^3y^2+(d-1)^3x^2)+(194-86\sqrt{5})((c-1)^3y+(d-1)^3x)+\\
&+(12\sqrt{5}-15)((c-1)^2y^2+(d-1)^2x^2)+(84\sqrt{5}-162)(c-1)(d-1)(c-1+d-1)+\\
&+(98-42\sqrt{5})(c-1)(d-1)xy((c-1)y+(d-1)x)+(100-40\sqrt{5})(c-1)(d-1)xy(x+y)+\\
&+(30\sqrt{5}-35)(c-1)^2(d-1)^2+(16\sqrt{5}-32)(c-1)^2(d-1)^2xy+\\
&+(114\sqrt{5}-242)(c-1)(d-1)+(108-42\sqrt{5})(c-1)(d-1)xy.
\end{align*}}
It is straightforward to check that all the coefficients appearing in the polynomials $Q_k$ above are positive.
%The smallest such coefficient is $36-16\sqrt{5}=0.22291\ldots$ and it appears in $Q_{13}$. 
Since $x, y, c-1$ and $d-1$ are all nonnegative it follows that $f(c,d,x,y)\ge 0$ as soon as equality \eqref{fasasum} holds. 
Moreover, $f(c,d,x,y)=0$ if and only if $x=y=0$ and $c=d=1$, that is, by observation \ref{extremum}, if and only if the pentagon $K=ABCDE$ is an affine regular pentagon.
This completes the proof of the main theorem.
\end{section}

\newpage
\begin{section}{\bf A generalization attempt}

Given $K=A_1A_2\ldots A_n$ a convex polygon with $n$ vertices, $n\ge 3$, and a fixed $0<r\le 1$,
consider the point $B_k$ on the edge $A_{k+1}A_{k+2}$ such that $A_{k+1}B_k/A_{k+1}A_{k+2}=r$.
Consider the $n$-gon, $K_r$, bounded by the segments $A_kB_k$ - see figure \ref{l5}. Until now we studied the case
$r=1$.
\vspace{-1cm}
\begin{figure}[!htb]
\centering
\includegraphics[scale=2]{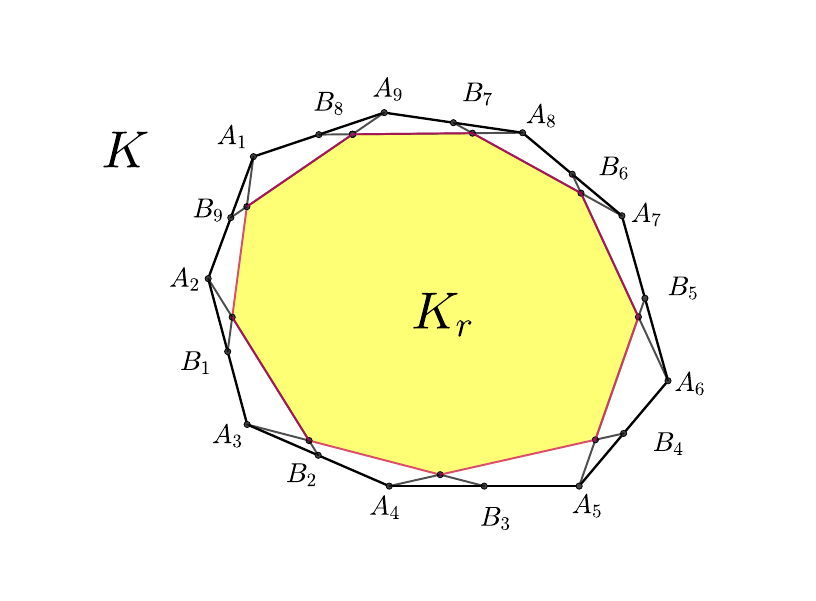}
\vspace{-1cm}
\caption{The polygon $K_r$}
\label{l5}
\end{figure}

The question again is, for a fixed $n$, to find the range of the ratio $\Delta(K_r)/\Delta(K)$, where $K$ is an $n$-sided convex polygon.

If $K$ is a triangle, it is known that
\begin{equation}
\frac{\Delta(K_r)}{\Delta(K)}=\frac{(2r-1)^2}{r^2-r+1}.
\end{equation}

This can be derived from a more general formula of Routh \cite{routh} who proved that if three cevians divide the side of the triangle $T$ in the ratio $r$, $s$ and $t$, they bound a triangle $T'$ whose area is given by
\begin{equation}
\frac{\Delta(T')}{\Delta(T)}=\frac{(rst-(1-r)(1-s)(1-t))^2}{(1-s+rs)(1-t+st)(1-r+rt)}.
\end{equation}
If $K$ is a quadrilateral, Ash et al. proved in \cite{aaa} that
\begin{equation}
\frac{(1-r)^3}{r^2-r+1}< \frac{\Delta(K_r)}{\Delta(K)}\le \frac{(1-r)^2}{r^2+1},
\end{equation}
and this is optimal. In their proof, they use cartesian coordinates and fix three of the vertices of $K$ at $(0,0)$, $(1,0)$ and $(0,1)$.
We have a simpler proof that uses the outer product approach.

Not surprisingly, for general $r$ nothing is known for $n\ge 5$. We make two observations.

One one hand, it is easy to see that $\Delta(K_r)\ge \Delta(K_1)$ since the former polygon contains the latter.
Hence, the best upper bound for $\Delta(K_r)/\Delta(K)$ is the trivial one.

On the other hand, the lower bound situation is more interesting as for small enough $r$, the ratio $\Delta(K_r)/\Delta(K)$ is bounded away from $0$. We have the following result
\begin{thm}\label{lastthm}
For all $n\ge 5$ and every $0<r<1/2$ we have that
\begin{equation}\label{hhh}
\frac{\Delta(K_r)}{\Delta(K)}\ge 1-2r.
\end{equation}
\end{thm}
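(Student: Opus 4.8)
The plan is to realize the region removed from $K$ in passing to $K_r$ as a controlled union of triangles, one cut off by each defining segment. First I would observe that the segment $A_kB_k$, extended to a full line $\ell_k$, is a chord of the convex polygon $K$ joining the vertex $A_k$ to the point $B_k$ on the edge $A_{k+1}A_{k+2}$. Since the only vertex of $K$ on the short boundary arc between $A_k$ and $B_k$ is $A_{k+1}$, convexity guarantees that $\ell_k$ cuts off from $K$ exactly the triangle $T_k:=A_kA_{k+1}B_k$, leaving $A_{k+2},\dots,A_{k-1}$ on the other side. By definition $K_r$ is the set of points of $K$ lying on the inner side of every $\ell_k$ simultaneously, so as a set $K_r = K\setminus\bigcup_{k=1}^n T_k$, and therefore
$$\Delta(K_r) = \Delta(K) - \Delta\Big(\bigcup_{k=1}^n T_k\Big).$$

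Next I would bound the union from above by subadditivity, $\Delta(\bigcup_k T_k)\le\sum_{k=1}^n\Delta(T_k)$; note that any overlap between consecutive triangles $T_{k-1}$ and $T_k$ only decreases the left-hand side, so discarding it is safe. Each $T_k$ shares the apex $A_k$ with the peripheral triangle $A_kA_{k+1}A_{k+2}$ and has base $A_{k+1}B_k$ collinear with $A_{k+1}A_{k+2}$; since $A_{k+1}B_k=r\,A_{k+1}A_{k+2}$ the two triangles have equal heights, so $\Delta(T_k)=r\,\Delta(A_kA_{k+1}A_{k+2})$. Summing gives $\sum_k\Delta(T_k)=r\,\Omega(K)$, whence
$$\Delta(K_r)\ge \Delta(K)-r\,\Omega(K).$$

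To finish it suffices to prove $\Omega(K)\le 2\Delta(K)$ for every convex $n$-gon with $n\ge 5$, extending Lemma \ref{Omegaga} beyond the pentagon. I would prove this with the outer product, taking the base point $O$ to be any point in $\mathrm{int}(K_1)$, which is nonempty precisely because $n\ge 5$. Using the signed-area identity $\Delta(A_kA_{k+1}A_{k+2})=\Delta(OA_kA_{k+1})+\Delta(OA_{k+1}A_{k+2})-\Delta(OA_kA_{k+2})$, the first two families telescope to $\Delta(K)$ each upon summation, so
$$\Omega(K)=2\Delta(K)-\sum_{k=1}^n\Delta(OA_kA_{k+2}).$$
Because $O$ lies strictly on the interior side of each short diagonal $A_kA_{k+2}$, every term $\Delta(OA_kA_{k+2})$ is a positive signed area, so the subtracted sum is positive and $\Omega(K)<2\Delta(K)$. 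Combining this with the previous display yields $\Delta(K_r)\ge\Delta(K)-2r\,\Delta(K)=(1-2r)\Delta(K)$, as claimed; the hypothesis $r<1/2$ is exactly what makes the bound nontrivial, and a posteriori it forces $\Delta(K_r)>0$, so that $K_r$ is a genuine $n$-gon.

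The routine parts are the area computation $\Delta(T_k)=r\,\Delta(A_kA_{k+1}A_{k+2})$ and the telescoping. The step I expect to require the most care is the set identity $K_r=K\setminus\bigcup_k T_k$: one must check that each chord $\ell_k$ isolates the single vertex $A_{k+1}$ and that the inner intersection of the half-planes bounded by the $\ell_k$ is indeed the polygon bounded by the segments $A_kB_k$. The other delicate point is the generalization $\Omega(K)\le 2\Delta(K)$ to arbitrary $n\ge 5$; the outer-product argument above handles it uniformly, with the nonemptiness of $\mathrm{int}(K_1)$ being the only place the hypothesis $n\ge 5$ is used.
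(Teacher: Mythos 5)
Your proof is correct, and its first half coincides with the paper's: the paper also bounds the region cut away from $K$ by $\sum_k \Delta(A_kA_{k+1}B_k)=r\,\Omega(K)$ (read off from figure \ref{l5}) and then reduces the theorem to the lemma $\Omega(K)<2\Delta(K)$; you merely make the covering and subadditivity steps explicit, which is a welcome tightening. Where you genuinely diverge is in the proof of that lemma for general $n$. The paper proves $\Omega(K)<2\Delta(K)$ only for pentagons (Lemma \ref{Omegaga}, via the identity $\Omega(K)=2\Delta(K)-(1+ad+bc)$) and for larger $n$ simply asserts it, illustrating with the hexagon, where the peripheral triangles split into two interior-disjoint classes each of total area less than $\Delta(K)$. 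That bipartition argument extends to every even $n$, since the overlap graph of the peripheral triangles is the cycle $C_n$; but for odd $n\ge 7$ that cycle is not bipartite, no such two-class partition exists, and the paper offers no substitute, so its general claim is left unproved there. Your telescoping identity $\Omega(K)=2\Delta(K)-\sum_{k=1}^n\Delta(OA_kA_{k+2})$, with $O$ chosen in $\mathrm{int}(K_1)$ so that every subtracted signed area is positive, covers all $n\ge 5$ uniformly, and it is in fact the coordinate-free generalization of the paper's pentagon computation: taking $O=AD\cap CE$ as in Section 3, two of the five terms vanish and the surviving ones are exactly $1+ad+bc$. So your route buys a complete argument in the odd case the paper glosses over, at the modest cost of invoking the nonemptiness of $\mathrm{int}(K_1)$ for $n\ge 5$, which the paper's Section 1 setup grants and which, as you note, is the only place the hypothesis $n\ge 5$ is used.
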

\begin{proof}
From figure \ref{l5} we can see that
\begin{equation}
\Delta(K_r)+r\Omega(K)=\Delta(K_r)+\sum\Delta(A_kA_{k+1}B_k)>\Delta(K).
\end{equation}
But in Lemma \ref{Omegaga} we showed that $\Omega(K)<2\Delta(K)$ if $K$ is a pentagon;
the inequality holds for all convex $n$-gons. For instance, if $K$ is a hexagon is it
clear that the set of peripheral triangles can be partitioned into two classes, such
that no two triangles within the same class have interior common points.
This implies that $\Delta(K_r)+2r\Delta(K)>\Delta(K)$, from which \eqref{hhh} follows.
\end{proof}

The problem of determining the range of $\Delta(K_r)/\Delta(K)$ for fixed $r$ in $(0,1)$
and $K$ a pentagon is probably a very difficult one. One additional reason for expecting this is that numerical experiments suggest
that if $r<0.88$ then the maximum of $\Delta(K_r)/\Delta(K)$ is no longer reached for the case of the affine regular pentagon.

\end{section}

\thispagestyle{empty}
{\footnotesize{
}}

\end{document}